\documentclass[12pt, reqno, twoside, letterpaper]{amsart}

\usepackage{paperstyle}
\usepackage{graphicx}

\usepackage{mathtools}
\usepackage{todonotes}
\usepackage[norefs,nocites]{refcheck}

\newcommand{\be}{\begin{equation}}
\newcommand{\ee}{\end{equation}}
\newcommand{\dalign}[1]{\[\begin{aligned} #1 \end{aligned}\]}

\newcommand{\euB}{\EuScript{B}}

\newcommand{\euV}{\EuScript{V}}
\newcommand{\euZ}{\EuScript{Z}}
\newcommand{\er}{\mathrm{e}}


\title[Towards the GRH using zeros of the Riemann zeta function]
{Towards the Generalized Riemann Hypothesis using only zeros of the Riemann zeta function}
      
\author[W.~Banks]{William Banks}

\address{Department of Mathematics, 
         University of Missouri, 
         Columbia MO 65211, USA.}

\email{bankswd@missouri.edu}
        
\date{\today}

\begin{document}

\begin{abstract}
For any real $\beta_0\in[\tfrac12,1)$, let
${\rm GRH}[\beta_0]$ be the assertion that
for every Dirichlet character $\chi$ and all zeros
$\rho=\beta+i\gamma$ of $L(s,\chi)$, one has $\beta\le\beta_0$
(in particular, ${\rm GRH}[\frac12]$ is the Generalized Riemann Hypothesis).
In this paper, we show that the validity of ${\rm GRH}[\frac{9}{10}]$
depends only on certain distributional properties of the zeros of the
Riemann zeta function $\zeta(s)$. No conditions are imposed on the zeros
of nonprincipal Dirichlet $L$-functions.
\end{abstract}

\thanks{MSC Numbers: Primary: 11M06, 11M26; Secondary: 11M20.}

\thanks{Keywords: Riemann zeta function.}

\thanks{Data Availability Statement: Data sharing not applicable to this article as no datasets
were generated or analysed during the current study.}

\thanks{Potential Conflicts of Interest: NONE}

\thanks{Research Involving Human Participants and/or Animals: NONE}
\maketitle

\maketitle


\tableofcontents


{\large\section{Introduction and statement of results}
\label{sec:intro}}

The \emph{Riemann zeta function} is a central
object of study in analytic number theory. In terms of 
the complex parameter $s=\sigma+it$, the zeta function is defined in the
half-plane $\sigma>1$ by two equivalent expressions:
$$
\zeta(s)\defeq\sum_{n\ge 1} n^{-s}
=\prod_{p\text{~prime}}(1-p^{-s})^{-1}.
$$
Riemann~\cite{Riemann} showed
that $\zeta(s)$ extends analytically to a meromorphic
function in the whole complex plane, its only singularity being a
simple pole at $s=1$. Moreover, the zeta function
satisfies a functional equation relating its values at
$s$ and $1-s$.\footnote{There are many excellent accounts of the
theory of the Riemann zeta function; we refer the reader to 
Titchmarsh~\cite{Titchmarsh} and to Borwein \emph{et al}~\cite{Borwein}
for essential background.}
The \emph{Riemann Hypothesis} (RH) asserts that if $\rho=\beta+i\gamma$
is a zero of $\zeta(s)$ with real part $\beta>0$, then $\beta=\frac12$.

More generally, for a Dirichlet character $\chi\bmod q$,
the Dirichlet \text{$L$-function} $L(s,\chi)$ is defined 
for $\sigma>1$ by:
$$
L(s,\chi)\defeq\sum_{n\ge 1} \chi(n)n^{-s}
=\prod_{p\text{~prime}}(1-\chi(p)p^{-s})^{-1}.
$$
The function $L(s,\chi)$ extends to a meromorphic
function (which is entire if $\chi$ is nonprincipal),
and when $\chi$ is primitive
it satisfies a simple functional equation relating its values at
$s$ and $1-s$; see, e.g., Bump~\cite[Chapter~1]{Bump}.
The \emph{Generalized Riemann Hypothesis} (GRH), which was perhaps first
formulated by Piltz in 1884 (see Davenport~\cite{Daven}), asserts that 
if $\rho=\beta+i\gamma$ is a zero of $L(s,\chi)$ with
$\beta>0$, then $\beta=\frac12$.

For any \emph{principal} character $\chi_0\bmod q$ one has
$$
L(s,\chi_0)=\zeta(s)\prod_{p\,\mid\,q}(1-p^{-s}),
$$
hence RH is equivalent to GRH for $L(s,\chi_0)$. On the other hand,
for \emph{nonprincipal} characters $\chi$, no direct relationship between
RH and GRH for $L(s,\chi)$ has been previously established. To establish
such a connection, we study the following weak form of the GRH
for Dirichlet $L$-functions.

\bigskip\noindent{\sc Hypothesis ${\rm GRH}[\beta_0]$}:
{\it Given $\beta_0\in[\frac12,1)$, the
inequality $\beta\le\beta_0$ holds for all zeros $\rho=\beta+i\gamma$ of
an arbitrary Dirichlet $L$-function $L(s,\chi)$.}

\bigskip\noindent
Note that ${\rm GRH}[\tfrac12]$ is equivalent to the
assertion that GRH holds for all Dirichlet \text{$L$-functions}.
In the present paper, we show that hypothesis ${\rm GRH}[\frac{9}{10}]$
can be reformulated entirely in terms of certain distributional
properties of the zeros of the Riemann zeta function.

To state our results, we introduce some notation. If $T>0$ is not the
ordinate of a zero of the zeta function, then $N(T)$ is used to denote the number
of zeros $\rho=\beta+i\gamma$ of $\zeta(s)$ in the rectangle $0<\beta<1$,
$0<\gamma<T$, and we define
$$
S(T)\defeq\frac1\pi{\rm arg}\,\zeta(1/2+iT).
$$
If $\gamma>0$ is the ordinate of a zero, then we set
$$
N(\gamma)\defeq\frac12\{N(\gamma^+)+N(\gamma^-)\},\qquad
S(\gamma)\defeq\frac12\{S(\gamma^+)+S(\gamma^-)\}.
$$
Using an explicit form of the well known relation
(see, e.g., Montgomery and Vaughan\cite[Corollary 14.2]{MontVau})
\be\label{eq:NTSTrelation}
N(T)=\frac{T}{2\pi}\log\frac{T}{2\pi}-\frac{T}{2\pi}+\frac{7}{8}
+S(T)+O(T^{-1})\qquad(T>0),
\ee
one sees that the difference $S(\gamma^+)-S(\gamma^-)$ is an integer
for every zero $\rho=\beta+i\gamma$ of $\zeta(s)$ with $\gamma>0$. Extending
the definition of $S(T)$ appropriately, this also holds for complex zeros
with $\gamma<0$. Therefore, writing
$$
\e(u)\defeq \er^{2\pi iu}\qquad(u\in\R),
$$
we can unambiguously define
\be\label{eq:Zrho-defn}
\euZ(\rho)\defeq\lim\limits_{T\to\gamma}\overline{\e\big(S(T)\big)}
\ee
(the common value of the left and right limits)
for every complex zero $\rho$ of $\zeta(s)$.

Throughout the paper, $C_c^\infty(\R^+)$ is the space of smooth
functions $f:\R^+\to\C$ with compact support in $\R^+$.
Let $\mu$ and $\phi$ denote the M\"obius and Euler functions, respectively.

\bigskip

\begin{theorem}\label{thm:RHvsGRH}
Assume RH. Suppose that for every function $\euB\in C_c^\infty(\R^+)$
and every rational number $\xi\defeq m/q$ with $0<m<q$ and $(m,q)=1$, the bound
\be\label{eq:superbound}
\sum_{\rho=\frac12+i\gamma}\xi^{-i\gamma}\,\euZ(\rho)
\euB\Big(\frac{\gamma}{2\pi X}\Big)
+\frac{\mu(q)}{\phi(q)}\sum_{n\ge 1}\Lambda(n)\euB(n/X)
\,\mathop{\ll}\limits_{\xi,\euB,\eps}\,X^{9/10+\eps}
\ee
holds for any $\eps>0$, where the sum on the left side runs over all
complex zeros $\rho=\frac12+i\gamma$ of $\zeta(s)$,
the quantity $\euZ(\rho)$ is given by \eqref{eq:Zrho-defn},
and the implied constant depends only on
$\xi$, $\euB$, and $\eps$. Then, the hypothesis ${\rm GRH}[\frac{9}{10}]$ is true.
\end{theorem}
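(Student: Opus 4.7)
\emph{Plan.} Under RH, my approach is to convert the $\euZ$-weighted zero sum in \eqref{eq:superbound} into an additively twisted prime sum, decompose it via Dirichlet characters modulo $q$, and then force any hypothetical zero of an $L(s,\chi)$ with $\beta>\tfrac{9}{10}$ to violate the resulting bound.

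\emph{Key identity.} The crucial first step is to recognise that from $\zeta(s)=\chi_\zeta(s)\zeta(1-s)$ (with $\chi_\zeta$ the functional-equation factor) combined with $\zeta(\bar s)=\overline{\zeta(s)}$, one has $\chi_\zeta(\tfrac12+iT)=\e(S(T))$ on the critical line, and hence $\euZ(\rho)=\overline{\chi_\zeta(\rho)}=\chi_\zeta(1-\rho)$. This converts the nonholomorphic weight $\euZ(\rho)$ into a meromorphic factor amenable to contour integration. Letting $\Phi(s)$ denote the entire extension of $\euB(\gamma/(2\pi X))$ provided by Paley--Wiener (applied to $\hat\euB$), I consider
\[
\mathcal{I}\defeq\frac{1}{2\pi i}\int_{(c)}\left(-\frac{\zeta'(s)}{\zeta(s)}\right)\chi_\zeta(1-s)\,\xi^{1/2-s}\,\Phi(s)\,ds\qquad(c>1).
\]
On $\Re s=c$, the Dirichlet expansion $-\zeta'/\zeta(s)=\sum_n\Lambda(n)n^{-s}$ combined with the Mellin--Barnes evaluation $\frac{1}{2\pi i}\int_{(c)}(2\pi w)^{-s}\cos(\pi s/2)\Gamma(s)\,ds=\cos(2\pi w)$ should identify $\mathcal{I}$ with $-\sum_n\Lambda(n)\e(n\xi)\euB(n/X)$ up to $O(X^\eps)$. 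Shifting the contour leftwards and invoking RH picks up exactly the residues $\sum_\rho\xi^{-i\gamma}\euZ(\rho)\euB(\gamma/(2\pi X))$, together with manageable contributions from $s=1$, from the trivial zeros of $\zeta$, and from the poles of $\chi_\zeta(1-s)$. Comparing the two evaluations yields
\[
\sum_{\rho=\frac12+i\gamma}\xi^{-i\gamma}\euZ(\rho)\euB(\gamma/(2\pi X)) = -\sum_{n\ge 1}\Lambda(n)\,\e(n\xi)\,\euB(n/X) + O_{\xi,\euB,\eps}(X^\eps).
\]

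\emph{Characters and isolation.} For $\xi=m/q$ with $(m,q)=1$, the Gauss-sum orthogonality $\e(mn/q)=\phi(q)^{-1}\sum_{\chi\bmod q}\tau(\bar\chi)\chi(m)\chi(n)$ (valid for $(n,q)=1$) together with $\tau(\bar\chi_0)=\mu(q)$ gives
\[
\sum_n\Lambda(n)\e(n\xi)\euB(n/X) = \frac{\mu(q)}{\phi(q)}\sum_n\Lambda(n)\euB(n/X) + \frac{1}{\phi(q)}\sum_{\chi\ne\chi_0}\tau(\bar\chi)\chi(m)\,\psi_\chi(X) + O_q(X^\eps),
\]
where $\psi_\chi(X)\defeq\sum_n\Lambda(n)\chi(n)\euB(n/X)$. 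Substituting the key identity into \eqref{eq:superbound} causes the $\mu(q)/\phi(q)$ term to cancel, leaving $\phi(q)^{-1}\sum_{\chi\ne\chi_0}\tau(\bar\chi)\chi(m)\psi_\chi(X)\ll X^{9/10+\eps}$ uniformly in $m\in(\Z/q\Z)^*$. Multiplying by $\bar\chi_1(m)$ and averaging over $m$ extracts a single character: $\tau(\bar\chi_1)\psi_{\chi_1}(X)\ll\phi(q)X^{9/10+\eps}$, and since $|\tau(\bar\chi_1)|\asymp q^{1/2}$ for $\chi_1$ primitive, one obtains $\psi_{\chi_1}(X)\ll_{\chi_1,\euB,\eps}X^{9/10+\eps}$ for every nonprincipal $\chi_1$ mod $q$. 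Letting $q$ vary covers every primitive (hence every nontrivial) Dirichlet character, since the extra factors $(1-\chi^*(p)p^{-s})$ at imprimitive moduli contribute only zeros on $\Re s=0$.

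\emph{Finishing and main obstacle.} The standard explicit formula $\psi_{\chi_1}(X)=-\sum_{\rho_{\chi_1}}X^{\rho_{\chi_1}}M\euB(\rho_{\chi_1})+O_{\chi_1}(1)$, with $M\euB$ the Mellin transform of $\euB$, then closes the argument: a zero $\rho_1=\beta_1+i\gamma_1$ of $L(s,\chi_1)$ with $\beta_1>\tfrac{9}{10}$, detected by choosing $\euB$ whose Mellin transform is sharply localised near $\gamma_1$ and of definite sign there, would produce a term $-X^{\rho_1}M\euB(\rho_1)$ of size $\gg X^{\beta_1}$, dominating all other contributions for $X$ sufficiently large and contradicting the $X^{9/10+\eps}$ bound from the previous step. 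Hence ${\rm GRH}[\tfrac{9}{10}]$ holds. The main obstacle is the key identity itself: the Mellin--Barnes step naturally produces $\cos(2\pi w)=\tfrac12(\e(w)+\e(-w))$ rather than $\e(w)$ alone, so one must verify that the $\e(-n\xi)$ contribution is cancelled (presumably thanks to the support condition $\euB\in C_c^\infty(\R^+)$, which already restricts the zero sum to $\gamma>0$) and that the residues at $s=1$, at the trivial zeros, and at the poles of $\chi_\zeta(1-s)$ are all of size $X^\eps$.
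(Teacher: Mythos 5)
Your high-level architecture---convert the $\euZ$-weighted zero sum into an additively twisted prime sum, decompose by Dirichlet characters mod $q$, isolate $\psi_\chi(X)$ via orthogonality, and invoke the standard equivalence between $\psi_\chi(X)\ll X^{\beta_0+\eps}$ and a zero-free half-plane---is the same as the paper's. Your observation that (under RH) $\euZ(\rho)=\overline{\chi_\zeta(\rho)}=\chi_\zeta(1-\rho)$ is correct and is a clean reformulation, though the paper does not use it; it works directly with the asymptotic relation~\eqref{eq:NTSTrelation} between $N(T)$ and $S(T)$. Your ``characters and isolation'' and ``finishing'' paragraphs correspond to Lemmas~\ref{lem:ultraclean} and~\ref{lem:aloevera}, modulo a harmless sign convention in $\e(\pm n\xi)$ and a rescaling $X\mapsto\xi X$ that you elide.

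The genuine gap is in the ``key identity'' and its proposed derivation. You posit an entire function $\Phi(s)$ with $\Phi(\tfrac12+i\gamma)=\euB\big(\gamma/(2\pi X)\big)$, to be obtained from Paley--Wiener. No such $\Phi$ exists: the right-hand side is a nonzero $C_c^\infty$ function of $\gamma$, and an entire function that vanishes on an open interval of the critical line vanishes identically. So the contour integral $\mathcal{I}$ cannot be set up as written, and the residue computation that is supposed to produce $\sum_\rho\xi^{-i\gamma}\euZ(\rho)\euB(\gamma/(2\pi X))$ never gets off the ground. (Paley--Wiener applied to $\hat\euB$ only tells you $\euB$ itself extends to an entire function of exponential type; that is the opposite direction and does not help here.) The paper's actual route is to start from the prime side: apply the explicit formula to $\varphi(u)=\e(-u\xi)\euB(u/X)$, obtaining the zero sum $\sum_\rho\hat\varphi(\rho)$ with $\hat\varphi$ the Mellin transform (which \emph{is} holomorphic on a vertical strip), and then use a carefully quantified stationary-phase analysis (Lemma~\ref{lem:the-child}) to show that $\hat\varphi(\tfrac12+i\gamma)=\xi^{-1/2-i\gamma}\euZ(\rho)\euB(\gamma/(2\pi\xi X))+O_{\xi,\euB}(X^{-1/10})$ when $\gamma\asymp X$. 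Summing that per-zero error over the $\asymp X\log X$ zeros with $\gamma\asymp X$ yields the $O(X^{9/10})$ total error in Theorem~\ref{thm:vonMangoldt-twist}. Your claimed error $O(X^\eps)$ is far too strong and is not provable by these methods; the exponent $9/10$ in the theorem is exactly the balance of the stationary-phase error, not merely an echo of~\eqref{eq:superbound}. To repair your argument you must replace the contour-integration step with something like the paper's explicit-formula-plus-stationary-phase computation, and accept the weaker (but sufficient) error term.
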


We reiterate that the hypotheses of Theorem~\ref{thm:RHvsGRH}
involve only properties of the zeros of the Riemann zeta function.
\emph{No conditions are imposed on the zeros
of nonprincipal Dirichlet $L$-functions}.

We also have the following converse of Theorem~\ref{thm:RHvsGRH}.

\begin{theorem}\label{thm:RHvsGRH2}
Assume RH. If ${\rm GRH}[\frac{9}{10}]$ is true, then
for every $\euB\in C_c^\infty(\R^+)$ and 
every rational number $\xi\defeq m/q$ with $0<m<q$ and $(m,q)=1$, the bound
\eqref{eq:superbound} holds for any $\eps>0$.
\end{theorem}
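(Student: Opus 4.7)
The plan is to reverse the direction of the argument underlying Theorem~\ref{thm:RHvsGRH}, now deducing \eqref{eq:superbound} from ${\rm GRH}[\tfrac{9}{10}]$. My first step is to recognize the sum $\sum_\rho \xi^{-i\gamma}\euZ(\rho)\euB(\gamma/(2\pi X))$ as, essentially, a prime exponential sum via a Guinand--Weil-type explicit formula applied to $\zeta(s)$. Using \eqref{eq:NTSTrelation}, the phase $\euZ(\rho)=\overline{\e(S(\gamma))}$ differs from $\overline{\e(\vartheta(\gamma))}$ by a controlled factor, where $\vartheta(\gamma)\defeq\tfrac{\gamma}{2\pi}\log\tfrac{\gamma}{2\pi}-\tfrac{\gamma}{2\pi}+\tfrac{7}{8}$ is the smooth main term of $N(T)$. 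Substituting this into the explicit formula with test function $\gamma\mapsto \xi^{-i\gamma}\,\overline{\e(\vartheta(\gamma))}\,\euB(\gamma/(2\pi X))$ and carrying out a stationary-phase analysis (whose stationary point, set against $\log n/(2\pi)$, sits at $n\asymp X$) should yield
\[
\sum_\rho \xi^{-i\gamma}\euZ(\rho)\euB\Big(\tfrac{\gamma}{2\pi X}\Big) = -\sum_n \Lambda(n)\,\e(mn/q)\,\euB(n/X) + O_{\xi,\euB,\eps}(X^{9/10+\eps}).
\]

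My second step is to expand the exponential sum via characters. For $(m,q)=(n,q)=1$, Fourier inversion on $(\Z/q\Z)^\times$ gives $\e(mn/q) = \phi(q)^{-1}\sum_{\chi\bmod q} G(\chi,m)\,\chi(n)$, with $G(\chi_0,m)=c_q(m)=\mu(q)$ and, for $\chi$ primitive, $G(\chi,m)=\chi(m)\tau(\overline\chi)$; in all cases $|G(\chi,m)|\ll q^{1/2}$. Writing $\Psi(X,\chi,\euB)\defeq\sum_n \chi(n)\Lambda(n)\euB(n/X)$ and isolating $\chi=\chi_0$, I get (up to $O(\log^2(qX))$ from prime powers of primes dividing $q$)
\[
\sum_n\Lambda(n)\,\e(mn/q)\,\euB(n/X) = \tfrac{\mu(q)}{\phi(q)}\sum_n\Lambda(n)\euB(n/X) + \tfrac{1}{\phi(q)}\sum_{\chi\neq\chi_0}G(\chi,m)\,\Psi(X,\chi,\euB).
\]
Combining with Step~1, the principal-character contribution exactly cancels the $\mu(q)/\phi(q)$-term appearing in \eqref{eq:superbound}, and the estimate to establish reduces to $\phi(q)^{-1}\sum_{\chi\neq\chi_0}G(\chi,m)\,\Psi(X,\chi,\euB)\ll X^{9/10+\eps}$.

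My third step invokes ${\rm GRH}[\tfrac{9}{10}]$ through the classical explicit formula for $L(s,\chi)$. For every nonprincipal $\chi\bmod q$,
\[
\Psi(X,\chi,\euB) = -\sum_{\rho_\chi}\widetilde\euB(\rho_\chi)\,X^{\rho_\chi} + O(\log q),
\]
where $\widetilde\euB$ is the Mellin transform of $\euB$ and, because $\euB\in C_c^\infty(\R^+)$, decays faster than any polynomial along vertical lines. By hypothesis, every nontrivial zero satisfies $\Re\rho_\chi\le 9/10$, so $|X^{\rho_\chi}|\le X^{9/10}$; combined with $N_\chi(T)\ll T\log(qT)$ and the rapid vertical decay of $\widetilde\euB$, each $\Psi(X,\chi,\euB)\ll_\eps X^{9/10+\eps}$. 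Summing over the $\phi(q)$ nonprincipal $\chi$ (with $q$ depending only on $\xi$) and using $|G(\chi,m)|\ll q^{1/2}$ delivers the required bound.

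The main obstacle is Step~1: producing the Guinand--Weil transformation with an error that actually fits inside $O(X^{9/10+\eps})$. This demands simultaneous control of the $O(T^{-1})$ slack in \eqref{eq:NTSTrelation}, the discontinuities of $S(T)$ at the zeros (where $\euZ(\rho)$ is defined as the symmetric average), and the stationary-phase regime of the chirp $\vartheta$ across the support of $\euB(\cdot/(2\pi X))$. This identity is presumably the analytic heart of Theorem~\ref{thm:RHvsGRH} and is invoked here in the reverse direction; once it is available, Steps~2 and~3 are routine.
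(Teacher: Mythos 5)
Your proposal follows essentially the same route as the paper. Your Step~1 is exactly the paper's Theorem~\ref{thm:vonMangoldt-twist} (the stationary-phase computation that turns the zeros sum into the twisted $\Lambda$-sum, proved once and used in both directions); your Steps~2 and~3 are the content of the paper's Lemma~\ref{lem:aloevera} and Lemma~\ref{lem:ultraclean}, with the minor cosmetic difference that you invoke the explicit formula for $L(s,\chi)$ directly whereas the paper reduces to the classical bound $\sum_{n\le X}\Lambda(n)\chi(n)\ll_{q,\eps}X^{\beta_0+\eps}$ via partial summation and a dyadic argument. You correctly identify Step~1 as the analytic heart already established for Theorem~\ref{thm:RHvsGRH}, so treating it as a black box is exactly what the paper does.
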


Throughout the paper, implied constants in the
symbols $\ll$, $O$, etc., often depend on various parameters
(e.g., $\xi$, $\euB$, $\eps$) as indicated by the notation (e.g.,
see \eqref{eq:superbound} above); these constants
are independent of all other parameters.

{\large\section{Approach}}

Using the explicit formula and assuming RH, we show that a sum of the form
\be\label{eq:einstein}
\sum_{n\ge 1}\Lambda(n)\e(-n\xi)\euB(n/X)
\ee
with $\xi\in\R^+$ and $\euB\in C_c^\infty(\R^+)$ is
equal to
\be\label{eq:feynman}
-\ssum{\rho=\frac12+i\gamma}\xi^{-1/2-i\gamma}
\euZ(\rho)\euB\Big(\frac{\gamma}{2\pi\xi X}\Big)
\ee
up to an error of at most $O_{\xi,\euB}(X^{9/10})$;
see Theorem~\ref{thm:vonMangoldt-twist}.
To handle the integrals that
arise in our use of the explicit formula, we apply the method of the
stationary phase (however, some care is needed to
obtain adequate and explicit estimates for the error terms);
see Lemma~\ref{lem:the-child}, which is the main workhorse for
the proof of Theorem~\ref{thm:vonMangoldt-twist}.

Given the close relationship (under RH) between the sums \eqref{eq:einstein}
and \eqref{eq:feynman}, the estimate \eqref{eq:superbound} for
rational numbers $\xi\defeq m/q$ allows us to deduce an equally strong bound
\be\label{eq:thorne}
\sum_{n\ge 1}\Lambda(n)\chi(n)\euB(n/X)
\,\mathop{\ll}\limits_{q,\euB,\eps}\, X^{9/10+\eps}.
\ee
for any primitive Dirichlet character $\chi$ modulo $q>1$;
see the proof of Theorem~\ref{thm:RHvsGRH} in \S\ref{sec:pfThm1}.
Since $\euB\in C_c^\infty(\R^+)$ is arbitrary,
${\rm GRH}[\frac{9}{10}]$ holds for the function $L(s,\chi)$.
Conversely, under ${\rm GRH}[\frac{9}{10}]$, it follows that
\eqref{eq:thorne} holds for every primitive character $\chi\bmod q$.
This leads to similar estimates for
the sums \eqref{eq:einstein} and \eqref{eq:feynman}; 
see the proof of Theorem~\ref{thm:RHvsGRH2} in \S\ref{sec:pfThm1}.

\bigskip

{\large\section{Integral bounds}
\label{sec:integralbounds}}

Our aim in this section is to establish certain integral bounds that are needed
in the proof of Theorem~\ref{thm:vonMangoldt-twist} (see
\S\ref{sec:vonMangoldt-twist}).

For each $\euB\in C_c^\infty(\R^+)$,
let $X_\euB\ge 10$ be a number large enough so that
$$
1\not\in X\,\text{\rm supp}(\euB)\defeq\big\{Xu:u\in\text{\rm supp}(\euB)\big\}
\qquad(X>X_\euB).
$$
The value of $X_\euB$ is held fixed throughout the paper.

\bigskip

\begin{lemma}\label{lem:mandalorian}
Let $\xi\in\R^+$, $\euB\in C_c^\infty(\R^+)$, and $X>X_\euB$. Then
$$
\int_{\R^+}\Big(1-\frac{1}{u^3-u}\Big)\e(-u\xi)\euB(u/X)\,du
\,\mathop{\ll}_{\xi,\euB}\,X^{-1}.
$$
\end{lemma}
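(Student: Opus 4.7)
The plan is to split the integrand as
\[
\Big(1-\tfrac{1}{u^3-u}\Big)\e(-u\xi)\euB(u/X) = \e(-u\xi)\euB(u/X) - \tfrac{1}{u^3-u}\e(-u\xi)\euB(u/X),
\]
and estimate the two resulting integrals, call them $I_1$ and $I_2$, separately. The first is essentially a rescaled Fourier transform of $\euB$ at the high frequency $X\xi$ and will exhibit super-polynomial decay in $X$. The second is trivially small because $1/(u^3-u) = O(X^{-3})$ on the support of $\euB(\,\cdot\,/X)$ once $X > X_\euB$.

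For $I_1$, the substitution $u = Xv$ gives $I_1 = X\int_{\R^+}\euB(v)\e(-vX\xi)\,dv$. Integrating by parts $N$ times against $\e(-vX\xi)$, each application bringing out a factor of $(2\pi iX\xi)^{-1}$ and moving a derivative onto $\euB$, yields
\[
\Big|\int_{\R^+}\euB(v)\e(-vX\xi)\,dv\Big| \ll_{\euB,N}(X\xi)^{-N}
\]
for every $N \ge 0$. Taking $N = 2$ gives $I_1 \ll_{\xi,\euB} X^{-1}$, as required.

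For $I_2$, compactness of $\text{supp}(\euB) \subset [a,b]$ with $a > 0$, together with the condition $1 \notin X\,\text{supp}(\euB)$ for $X > X_\euB$, forces $X\,\text{supp}(\euB) \subset (1,\infty)$ for such $X$. Since $X_\euB$ is held fixed and may be chosen as large as we like depending on $\euB$, we may assume $aX \ge 2$ throughout the support, so that $|u^3-u| = u(u^2-1) \ge u^3/2 \gg_\euB X^3$. The integrand of $I_2$ is thus $O_\euB(X^{-3})$ on an interval of length $O_\euB(X)$, and the trivial bound yields $I_2 \ll_\euB X^{-2}$, which is stronger than needed. Adding the two contributions completes the proof. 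No serious obstacle arises; the only technical point is the uniform lower bound on $u$ supplied by the choice of $X_\euB$, which is why the constant in the lemma is allowed to depend on $\euB$.
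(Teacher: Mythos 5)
Your proof is correct and uses essentially the same mechanism as the paper: two integrations by parts against $\e(-u\xi)$ to gain $(\xi X)^{-2}$, together with the observation that the support has length $\ll_\euB X$. The only cosmetic difference is that the paper keeps $g(u)=(1-1/(u^3-u))\euB(u/X)$ intact and integrates by parts once and for all (using $g''\ll_\euB X^{-2}$), whereas you split off the $1/(u^3-u)$ term and dispatch it with a trivial $O(X^{-3})\cdot O(X)$ bound, reserving the integration by parts for the main piece; both versions hinge on the same implicit choice of $X_\euB$ keeping $X\,\mathrm{supp}(\euB)$ bounded away from $u=1$.
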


\begin{proof}
Put
$$
g(u)\defeq\Big(1-\frac{1}{u^3-u}\Big)\euB(u/X)
\qquad(u\in\R^+).
$$
Then $g$ and all of its derivatives are smooth
and supported on the set $X\,\text{\rm supp}(\euB)$
(note that $\euB(u/X)=0$ for $u\in\{0,\pm 1\}$ since
$\euB\in C_c^\infty(\R^+)$ and $X>X_\euB$).
In particular, $g''(u)=0$ unless $u\,\mathop{\asymp}_\euB\, X$, in
which case one has $g''(u)\,\mathop{\ll}_\euB\,X^{-2}$. 
Integrating by parts twice, we have (since $g$ and $g'$ vanish at $0$ and $\infty$)
$$
\int_{\R^+}\e(-u\xi)g(u)\,du
=\int_{\R^+}
\frac{\e(-u\xi)}{(-2\pi i\xi)^2}\,g''(u)\,du
\,\mathop{\ll}_\euB\,\xi^{-2}X^{-2}
\,{\rm meas}\big(X\,\text{\rm supp}(\euB)\big),
$$
and the lemma follows.
\end{proof}

The next two technical lemmas are needed in the proof of
Lemma~\ref{lem:the-child} below.

\begin{lemma}\label{lem:technical}
Fix $u_0\in\R^+$, and let $\cL,\cD:C_c^\infty(\R^+)\to C_c^\infty(\R^+)$
be the linear operators defined by
$$
\cL F(u)\defeq\frac{uF(u)}{(u_0-u)}
\mand
\cD F(u)\defeq F'(u)\qquad(u\in\R^+).
$$
For any integer $k\ge 0$, let $(\cD\cL)^k$ and $\cL(\cD\cL)^k$ be the linear
operators given by
$$
[\cD\cL]^k\defeq
\mathop{\underbrace{~\cD\circ\cL\circ\cdots\circ\cD\circ\cL~}}
\limits_{\text{\rm $k$ copies each of $\cD$ and $\cL$}}
\mand
\cL[\cD\cL]^k\defeq\cL\circ[\cD\cL]^k.
$$
Then for every $F\in C_c^\infty(\R^+)$ and every integer $k\ge 0$ we have
\dalign{
[\cD\cL]^kF(u)&\,\mathop{\ll}_{F,k}\,
\max\big\{|u-u_0|^{-k},|u-u_0|^{-2k}\big\},\\
\cL[\cD\cL]^kF(u)&\,\mathop{\ll}_{F,k}\,
\max\big\{|u-u_0|^{-k},|u-u_0|^{-2k-1}\big\},
}
for all $u\in\R^+$, $u\ne u_0$.
\end{lemma}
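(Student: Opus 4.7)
I would prove both bounds simultaneously by induction on $k$, through the structural claim that $[\cD\cL]^k F(u)$ is a finite linear combination of expressions of the form $r(u)\,F^{(j)}(u)/(u_0-u)^n$, where $r$ is a polynomial in $u$ (with coefficients depending on $u_0$), and the indices satisfy $0\le j\le k$ and $k\le n\le 2k$. The number of summands depends only on $k$.

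The base case $k=0$ is immediate, as $[\cD\cL]^0 F=F$. For the inductive step, applying $\cL$ sends each term with parameters $(j,n)$ to a term with parameters $(j,n+1)$ after multiplying the numerator polynomial by $u$. Then applying $\cD$, via the product and quotient rules, produces descendants with denominator exponent either $n+1$ (from differentiating the polynomial factor, or from differentiating $F^{(j)}$) or $n+2$ (from differentiating $1/(u_0-u)^{n+1}$), and with derivative order either $j$ or $j+1$. All of these fit within the claimed ranges for $[\cD\cL]^{k+1}F$, namely $0\le j'\le k+1$ and $k+1\le n'\le 2(k+1)$.

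Given this structure, the bounds are essentially automatic. Since $F$ has compact support in $\R^+$, each product $r(u)F^{(j)}(u)$ is bounded by a constant depending only on $F$, $u_0$, and $k$, so each summand is $\ll|u-u_0|^{-n}$. A case split on whether $|u-u_0|\le 1$ or $|u-u_0|\ge 1$ shows that $|u-u_0|^{-n}\le\max\{|u-u_0|^{-k},|u-u_0|^{-2k}\}$ for every $n$ with $k\le n\le 2k$, which yields the first estimate. For the second estimate, one additional application of $\cL$ shifts each denominator exponent up by $1$ (so the range becomes $[k+1,2k+1]$), and the same case split bounds these by $\max\{|u-u_0|^{-k},|u-u_0|^{-2k-1}\}$ after noting that $|u-u_0|^{-(k+1)}\le|u-u_0|^{-k}$ when $|u-u_0|\ge 1$. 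The main obstacle is simply the careful bookkeeping in the inductive step --- verifying that the lower endpoint $n=k$ is preserved by the descendants that raise $n$ by only $1$, while the upper endpoint $n=2k$ is reached by those that raise $n$ by $2$ --- and no deeper ideas are required.
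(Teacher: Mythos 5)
Your proposal is correct and follows essentially the same route as the paper: an inductive structural claim that $[\cD\cL]^kF$ is a bounded (in $k$) linear combination of terms $r(u)F^{(j)}(u)/(u_0-u)^n$ with $0\le j\le k$ and $k\le n\le 2k$, followed by the trivial bound using compact support and a case split on $|u-u_0|\lessgtr 1$. The paper's version carries slightly heavier bookkeeping (it tracks the monomial degree $h$ separately and the exact relation $i+j=h+B$ via the family $\euV(A,B,C)$, together with explicit coefficient bounds $k!\cdot 3^k$), but the underlying idea and the crucial index ranges are identical to yours.
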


\begin{proof}
Let $F\in C_c^\infty(\R^+)$ be fixed in what follows.
For any integers $A,B\ge 0$ and a real number $C>0$,
let $\euV(A,B,C)$ denote the set of functions $G\in C_c^\infty(\R^+)$
of the form
$$
G(u)=\ssum{h,i,j\ge 0\\h\le A\le j\\i+j=h+B}
c_{h,i,j}\frac{u^hF^{(i)}(u)}{(u_0-u)^j},
$$
where the sum runs over nonnegative integers $h,i,j$,
and the coefficients $c_{h,i,j}$ are complex numbers
satisfying $|c_{h,i,j}|\le C$. If $F$ is supported on the
interval $[a,b]$, where $0<a<b<\infty$, then the trivial bound
\be\label{eq:eureka0}
\big|G(u)\big|\le C'_{A,B}\,C\max\{b^A,1\}
\max\limits_{0\le i\le A+B}\big|F^{(i)}(u)\big|
\max_{A\le j\le A+B}|u-u_0|^{-j}
\ee
holds for all $G\in\euV(A,B,C)$, where
$$
C'_{A,B}\defeq\big|\{(h,i,j):h,i,j\ge 0,~h\le A\le j,~i+j=h+B\}\big|
\le\sum_{h=0}^A\sum_{j=A}^{A+B}1\,\mathop{\ll}\limits_{A,B}\,1.
$$

Next, noting that
$$
\cL G(u)=\ssum{h,i,j\ge 0\\h\le A\le j\\i+j=h+B}c_{h,i,j}
\frac{u^{h+1}F^{(i)}(u)}{(u_0-u)^{j+1}}
=\ssum{h\ge 1,\,i,j\ge 0\\h\le A+1\le j\\i+j=h+B}c_{h-1,i,j-1}
\frac{u^hF^{(i)}(u)}{(u_0-u)^j},
$$
it follows that
\be\label{eq:eureka1}
\cL:\euV(A,B,C)\to\euV(A+1,B,C).
\ee
Similarly, we write $\cD G(u)=G_1(u)+G_2(u)+G_3(u)$, 
where the functions $G_j$ are defined as follows. First, we have
$$
G_1(u)\defeq\ssum{h\ge 1,\,i,j\ge 0\\h\le A\le j\\i+j=h+B}c_{h,i,j}
\frac{hu^{h-1}F^{(i)}(u)}{(u_0-u)^j}
=\ssum{h,i,j\ge 0\\h+1\le A\le j\\i+j=h+B+1}(h+1)c_{h+1,i,j}
\frac{u^hF^{(i)}(u)}{(u_0-u)^j}.
$$
Since the coefficients $(h+1)c_{h+1,i,j}$ do not exceed $AC$ in absolute
value, it is clear that $G_1\in\euV(A,B+1,AC)$. Next, we have
$$
G_2(u)\defeq\ssum{h,i,j\ge 0\\h\le A\le j\\i+j=h+B}c_{h,i,j}
\frac{u^hF^{(i+1)}(u)}{(u_0-u)^j}
=\ssum{h,j\ge 0,\,i\ge 1\\h\le A\le j\\i+j=h+B+1}c_{h,i+1,j}
\frac{u^hF^{(i)}(u)}{(u_0-u)^j},
$$
and clearly $G_2\in\euV(A,B+1,C)$. Finally, we have
$$
G_3(u)\defeq\ssum{h,i,j\ge 0\\h\le A\le j\\i+j=h+B}c_{h,i,j}
\frac{ju^hF^{(i)}(u)}{(u_0-u)^{j+1}}
=\ssum{h,i\ge 0,\,j\ge 1\\h\le A\le j-1\\i+j=h+B+1}(j-1)c_{h,i,j-1}
\frac{u^hF^{(i)}(u)}{(u_0-u)^j},
$$
As the coefficients $(j-1)c_{h,i,j-1}$ do not exceed $(A+B)C$ in absolute
value, we have $G_3\in\euV(A,B+1,(A+B)C)$. Consequently,
$\cD G\in\euV(A,B+1,(2A+B+1)C)$, and therefore
\be\label{eq:eureka2}
\cD:\euV(A,B,C)\to\euV\big(A,B+1,(2A+B+1)C\big).
\ee

Observe that $F$ itself lies in $\euV(0,0,1)$. Therefore,
an inductive argument using \eqref{eq:eureka1} and \eqref{eq:eureka2} shows that
for every integer $k\ge 0$ we have
$$
[\cD\cL]^kF\in\euV(k,k,k!\cdot 3^k)
\mand
\cL[\cD\cL]^kF\in\euV(k+1,k,k!\cdot 3^k).
$$
Using \eqref{eq:eureka0}, the result follows.
\end{proof}

\begin{lemma}\label{lem:funnyexpint}
For any positive number $\lambda$, we have
$$
\mathop{\int_\R}_{(|u|>\lambda)}\er^{-iu^2}\,du
=\frac{\er^{-i\lambda^2}}{i\lambda}+i
\mathop{\int_\R}_{(|u|>\lambda)}\er^{-iu^2} u^{-2}\,du
\ll\lambda^{-1}.
$$
\end{lemma}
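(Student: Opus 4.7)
The plan is to treat the oscillatory integral as a Fresnel-type improper integral and integrate by parts using the identity $\er^{-iu^2}=(-2iu)^{-1}\frac{d}{du}\er^{-iu^2}$, which is the standard device for converting a conditionally convergent integral of this shape into an absolutely convergent tail plus explicit boundary data.

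Concretely, I would first focus on the half-line piece $\int_\lambda^R \er^{-iu^2}\,du$ for a large parameter $R$ that will be sent to $\infty$ at the end. Writing $\er^{-iu^2}\,du=\frac{1}{-2iu}\,d\bigl(\er^{-iu^2}\bigr)$ and integrating by parts produces a boundary term $\bigl[\er^{-iu^2}/(-2iu)\bigr]_\lambda^R$ together with a tail integral $-\int_\lambda^R \er^{-iu^2}\,\frac{d}{du}\bigl[(-2iu)^{-1}\bigr]\,du=\frac{i}{2}\int_\lambda^R u^{-2}\er^{-iu^2}\,du$. The boundary at $R$ is $O(R^{-1})\to 0$, the boundary at $u=\lambda$ gives $\er^{-i\lambda^2}/(2i\lambda)$, and the tail integral is now absolutely convergent so letting $R\to\infty$ is legitimate.

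The second half-line $(-\infty,-\lambda)$ is handled by the substitution $u\mapsto -u$, which maps it onto $(\lambda,\infty)$ and preserves both $\er^{-iu^2}$ and $u^{-2}$; hence it contributes an identical copy of the same quantity. Adding the two pieces doubles the boundary term to $\er^{-i\lambda^2}/(i\lambda)$ and reassembles the tail integral over $\{|u|>\lambda\}$, yielding the claimed identity. Finally, the bound $\ll\lambda^{-1}$ is immediate: the boundary term has modulus exactly $1/\lambda$, and the remaining integral is dominated in absolute value by $\int_{|u|>\lambda}u^{-2}\,du=2/\lambda$.

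There is essentially no obstacle here: the only point that demands any care is the conditional convergence of the original Fresnel integral, and introducing the cutoff $R$ and passing to the limit only after the integration by parts takes care of that at once. Everything else is bookkeeping.
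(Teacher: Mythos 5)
The paper states Lemma~\ref{lem:funnyexpint} without proof, so there is no ``paper's proof'' to compare against; the integration-by-parts device you employ (converting $\er^{-iu^2}\,du$ to $(-2iu)^{-1}\,d(\er^{-iu^2})$, isolating the boundary term, and passing to an absolutely convergent tail integral) is exactly the natural argument, and it does establish the bound $\ll\lambda^{-1}$ correctly. That bound is the only thing the paper actually uses downstream, in the estimate of $\cK$ in the proof of Lemma~\ref{lem:the-child}.

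However, there is one detail you slid past. Your own calculation gives, for the half-line,
$$
\int_\lambda^\infty \er^{-iu^2}\,du
=\frac{\er^{-i\lambda^2}}{2i\lambda}
+\frac{i}{2}\int_\lambda^\infty \er^{-iu^2}u^{-2}\,du,
$$
and adding the two half-lines doubles both terms \emph{as integrals over $(\lambda,\infty)$}. But reexpressing the tail integral over the symmetric region $\{|u|>\lambda\}$ \emph{halves} the coefficient again (since the integrand $\er^{-iu^2}u^{-2}$ is even), so what your argument actually produces is
$$
\mathop{\int_\R}_{(|u|>\lambda)}\er^{-iu^2}\,du
=\frac{\er^{-i\lambda^2}}{i\lambda}
+\frac{i}{2}\mathop{\int_\R}_{(|u|>\lambda)}\er^{-iu^2}u^{-2}\,du,
$$
with coefficient $\tfrac{i}{2}$, not $i$. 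You assert this ``yields the claimed identity,'' but it does not quite: the lemma as printed has a factor-of-two error in the middle expression (easily confirmed by differentiating both sides in $\lambda$). Your argument recovers the corrected identity. So your proof is in fact sound; the oversight is only in not noticing that the intermediate identity you derived disagrees with the one stated. Since both versions give $|{\cdot}|\le 2\lambda^{-1}$, the final $\ll\lambda^{-1}$ bound --- the part that matters --- survives either way.
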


The next result is our primary tool; its proof is based on the well known
stationary phase method.

\begin{lemma}\label{lem:the-child}
Let $\xi\in\R^+$, $\euB\in C_c^\infty(\R^+)$, and $X>X_\euB$. Suppose
 $\euB$ is supported on the interval $[a,b]$, where $0<a<b<\infty$.
For any real number $\gamma\ne 0$, consider the integral $\cI(\gamma)$
defined by
$$
\cI(\gamma)\defeq\int_{\R^+}\e(-u\xi)\euB(u/X)u^{-1/2+i\gamma}\,du.
$$
Put
\be\label{eq:ast-bst-defn}
a_\star\defeq\frac{1+2a-\sqrt{1+4a}}{2},\qquad
b_\star\defeq\frac{1+2b+\sqrt{1+4b}}{2},\qquad
\gamma_\star\defeq\frac{\gamma}{2\pi\xi X}.
\ee
Then, for $\gamma_\star\not\in[a_\star,b_\star]$ the bound
$$
\cI(\gamma)\,\mathop{\ll}_{\xi,\euB}\,X^{1/2}
\max\big\{X^{-2}|\gamma|^{-2},|\gamma|^{-4}\big\}
$$
holds, whereas for $\gamma_\star\in[a_\star,b_\star]$ we have the estimate
$$
\cI(\gamma)=\xi^{-1/2-i\gamma}
\e\Big(\frac{\gamma}{2\pi}\log\frac{\gamma}{2\pi\er}+\frac78\Big)
\euB(\gamma_\star)+O_{\xi,\euB}(X^{-1/10}).
$$
\end{lemma}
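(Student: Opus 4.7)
The plan is to view $\cI(\gamma)$ as an oscillatory integral $\int_{\R^+}F(u)\er^{i\phi(u)}\,du$ with amplitude $F(u)\defeq\euB(u/X)u^{-1/2}$ and phase $\phi(u)\defeq\gamma\log u-2\pi\xi u$, and then carry out a stationary phase analysis with explicit error tracking. The phase has a unique critical point on $\R^+$ at $u_0\defeq\gamma/(2\pi\xi)=X\gamma_\star$, with $\phi''(u_0)=-4\pi^2\xi^2/\gamma$ and $\phi(u_0)=\gamma\log\bigl(\gamma/(2\pi\xi\er)\bigr)$. The threshold interval $[a_\star,b_\star]$ is engineered (using the algebraic identity $(a_\star-a)^2=a_\star$ and its analogue for $b_\star$) so that $\gamma_\star\in[a_\star,b_\star]$ places $u_0$ safely inside a neighborhood of $\text{\rm supp}(F)=[Xa,Xb]$ comfortably wider than the Fresnel zone of scale $\asymp\sqrt{X}$, whereas $\gamma_\star\notin[a_\star,b_\star]$ pushes $u_0$ well outside $\text{\rm supp}(F)$.

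For the off-support regime $\gamma_\star\notin[a_\star,b_\star]$, a direct check gives $|u-u_0|\gg_\euB\max(X,|\gamma|/\xi)$ for all $u\in\text{\rm supp}(F)$. Since $1/(i\phi'(u))=(2\pi i\xi)^{-1}\cdot u/(u_0-u)$, one integration by parts introduces the operator $\cD\cL/(2\pi i\xi)$ (with $\cL$ formed with the present $u_0$), so iterating $k$ times yields $\cI(\gamma)=(-2\pi i\xi)^{-k}\int_{\R^+}[\cD\cL]^kF(u)\,\er^{i\phi(u)}\,du$. I would then apply Lemma~\ref{lem:technical}, carefully tracking the $X$-dependence that the lemma absorbs into its implied constants (on the support, $|F^{(i)}(u)|\ll_\euB X^{-1/2-i}$, and each factor $u^h$ in the operator contributes $X^h$), and choose $k=4$. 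Combining these ingredients with the lower bound on $|u-u_0|$ and the support size $\asymp X$ recovers $X^{1/2}\max\{X^{-2}|\gamma|^{-2},|\gamma|^{-4}\}$, with the two sides of the maximum corresponding to the regimes $|\gamma|\lesssim X$ and $|\gamma|\gtrsim X$.

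For the on-support regime $\gamma_\star\in[a_\star,b_\star]$, in which $|\gamma|\asymp_{\xi,\euB}X$, I would insert a smooth cutoff to split $\cI=\cI_{\rm near}+\cI_{\rm far}$ at $|u-u_0|=L$ with $L\asymp X^{3/5}$. The far part is treated as in the previous case using one or two iterations of $\cD\cL$ applied to $F$ times the cutoff, giving $|\cI_{\rm far}|\ll X^{1/2}/L\ll X^{-1/10}$. For the near part, substitute $v=u-u_0$ and Taylor expand $\phi(u_0+v)=\phi(u_0)+\tfrac12\phi''(u_0)v^2+O(|\phi'''(u_0)|\,|v|^3)$ and $F(u_0+v)=F(u_0)+vF'(u_0)+O(v^2 X^{-5/2})$; the linear amplitude contribution integrates to zero by symmetry in $v$. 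Replacing $F$ by $F(u_0)$ and the phase by its quadratic part, then rescaling $w=v\sqrt{|\phi''(u_0)|/2}$, converts the leading piece into a truncated Fresnel integral, and Lemma~\ref{lem:funnyexpint} gives $\int_{|w|<\Lambda}\er^{-iw^2}\,dw=\sqrt{\pi}\,\er^{-i\pi/4}+O(\Lambda^{-1})$. The identities $F(u_0)\sqrt{2\pi/|\phi''(u_0)|}=\xi^{-1/2}\euB(\gamma_\star)$, $\er^{i\phi(u_0)}=\xi^{-i\gamma}\,\e\bigl(\tfrac{\gamma}{2\pi}\log\tfrac{\gamma}{2\pi\er}\bigr)$, and $\er^{-i\pi/4}=\e(7/8)$ then assemble the advertised main term. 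The Fresnel-tail error is $\ll X^{1/2}/L$, the cubic phase correction is $\ll F(u_0)\,|\phi'''(u_0)|\,L^4\ll X^{-5/2}L^4$, and the quadratic amplitude correction, after exploiting Gaussian cancellation, is $\ll X^{-1}$; with $L\asymp X^{3/5}$ all three are $\ll X^{-1/10}$.

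The main obstacle I anticipate is twofold. First, Lemma~\ref{lem:technical} as stated treats $F$ as fixed and absorbs all $X$-dependence into the implied constant, so I would need to revisit its proof and make its dependence on the $X$-rescaled amplitude $F(u)=\euB(u/X)u^{-1/2}$ fully quantitative (the powers $u^h$ in the operator and the sizes of $F^{(i)}$ each contribute explicit $X$-factors). Second, the cutoff $L\asymp X^{3/5}$ and the Taylor-truncation orders must be calibrated precisely enough that the far-region contribution, the Fresnel tail, and the cubic phase remainder all contribute at exactly the same $X^{-1/10}$ level; this explicit balancing is the technical heart of the stationary phase estimate and is presumably responsible for the specific $9/10$ exponent that propagates to the main theorems.
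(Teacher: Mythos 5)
Your proposal follows the same stationary-phase strategy as the paper's proof: iterated $\cD\circ\cL$ integration by parts for the off-critical-point regime, a cutoff at scale $X^{3/5}$ around the critical point (in your unscaled coordinates), Taylor truncation of the phase at quadratic order and the amplitude at constant order, and extension to a full Fresnel integral via Lemma~\ref{lem:funnyexpint}. The error budget you describe --- far-region IBP error, Fresnel tail, cubic-phase remainder, all balanced at $X^{-1/10}$ --- matches the paper's. Your verification of the algebraic identities $(a_\star-a)^2=a_\star$, $(b_\star-b)^2=b_\star$ underlying the role of $[a_\star,b_\star]$, and of the phase identity $\er^{-i\pi/4}=\e(7/8)$, is also correct.

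The one genuine structural difference, which you yourself flag as the main obstacle, is that the paper first substitutes $u\mapsto Xu$, writing $\cI(\gamma)=X^{1/2+i\gamma}\cJ$ where $\cJ=\int\er^{iXf(u)}g(u)\,du$ with $g(u)=\euB(u)/u^{1/2}$ supported on the \emph{fixed} interval $[a,b]$ and $u_0=\gamma_\star$ lying in a bounded region. This makes the amplitude and the support completely independent of $X$, so Lemma~\ref{lem:technical} applies verbatim with implied constants genuinely uniform in $X$; the cutoff becomes $\Delta=X^{-2/5}$, which is just your $L=X^{3/5}$ after rescaling. In your unscaled formulation, $F(u)=\euB(u/X)u^{-1/2}$ and $u_0=X\gamma_\star$ both depend on $X$, so applying Lemma~\ref{lem:technical} directly would require re-deriving its proof with explicit bookkeeping of the $X$-powers coming from $u^h$ and from $F^{(i)}\ll X^{-1/2-i}$. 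This is doable (and your sketch of the resulting powers appears consistent) but is exactly the bookkeeping the paper's rescaling is designed to avoid. So: same method, but the paper's coordinate choice buys a clean application of the technical lemma, while yours would need a strengthened, $X$-explicit version of it. The remaining small deviations (smooth vs.\ sharp cutoff in the split, noting that the linear amplitude term cancels by symmetry vs.\ simply bounding the remainder $r_2$) are cosmetic and do not affect the result.
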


\begin{proof}
Making the change of variables $u\mapsto Xu$, we have
\be\label{eq:solar1}
\cI(\gamma)=X^{1/2+i\gamma}\int_{\R^+}\er^{iXf(u)}g(u)\,du
=X^{1/2+i\gamma}\,\cJ\quad\text{(say)},
\ee
where
$$
f(u)\defeq -2\pi\xi u+\frac{\gamma\log u}{X},\qquad
g(u)\defeq \frac{\euB(u)}{u^{1/2}}.
$$
Note that $\gamma_\star=\gamma/(2\pi\xi X)$
is the only real number for which $f'(\gamma_\star)=0$.

For any given $\Delta>0$, we write
$\cJ=\cJ_\infty+\cJ_\star$
with
$$
\cJ_\infty\defeq\mathop{\int_{\R^+}}_{(|u-\gamma_\star|>\Delta)}
\er^{iXf(u)}g(u)\,du,\qquad
\cJ_\star\defeq\mathop{\int_{\R^+}}_{(|u-\gamma_\star|\le\Delta)}
\er^{iXf(u)}g(u)\,du.
$$
We study $\cJ_\infty$ first.
Let $\cL$ and $\cD$ be the operators defined in
Lemma~\ref{lem:technical} with $u_0\defeq\gamma_\star$,
and put
$$
g_k\defeq [\cD\cL]^kg
\mand
\tilde g_k\defeq \cL[\cD\cL]^kg\qquad(k\ge 0).
$$
According to Lemma~\ref{lem:technical},
\be\label{eq:signals}
g_k(u)\,\mathop{\ll}_{\euB,k}\,
\max\big\{|u-\gamma_\star|^{-k},|u-\gamma_\star|^{-2k}\big\}
\qquad(u\ne\gamma_\star).
\ee
Taking into account that $f'(u)=2\pi\xi u^{-1}(\gamma_\star-u)$, we have
$$
\cJ_\infty=\frac{1}{2\pi\xi}\mathop{\int_{\R^+}}_{(|u-\gamma_\star|>\Delta)}
\er^{iXf(u)}f'(u)\cdot\tilde g_0(u)\,du
=-\frac{1}{2\pi i\xi X}\mathop{\int_{\R^+}}_{(|u-\gamma_\star|>\Delta)}
\er^{iXf(u)} g_1(u)\,du,
$$
where we have used integration by parts in the last step.
Similarly, by induction on $k$, we see that
\be\label{eq:signals2}
\cJ_\infty=\frac{1}{(-2\pi i\xi X)^k}\mathop{\int_{\R^+}}_{(|u-\gamma_\star|>\Delta)}
\er^{iXf(u)} g_k(u)\,du.
\ee

The numbers $a_\star$ and $b_\star$ defined in \eqref{eq:ast-bst-defn} have
the property that if $\gamma_\star\not\in[a_\star,b_\star]$, then
$|u-\gamma_\star|>|\gamma_\star|^{1/2}$ for all $u\in[a,b]$. Hence, choosing
$\Delta\defeq|\gamma_\star|^{1/2}$ in the case that
$\gamma_\star\not\in[a_\star,b_\star]$, it follows that $\cJ_\infty=\cJ$
and $\cJ_\star=0$. Setting $k\defeq 4$ and combining \eqref{eq:signals}
and \eqref{eq:signals2}, we derive the bound
$$
\cJ\,\mathop{\ll}_\euB\,(\xi X)^{-4}
\max\big\{|\gamma_\star|^{-2},|\gamma_\star|^{-4}\big\}
\ll\max\big\{(\xi X)^{-2}|\gamma|^{-2},|\gamma|^{-4}\big\}
$$
In view of \eqref{eq:solar1}, we obtain the first statement of the lemma.

From now on, we assume that $\gamma_\star\in[a_\star,b_\star]$.
We further assume that
$X$ is large enough (depending on $\euB$) so that
\be\label{eq:Delta-constraint}
\Delta\defeq X^{-2/5}<\min\{1,2^{-1/2}a_\star\}.
\ee
Write $\cJ=\cJ_\infty+\cJ_\star$ as before with this $\Delta$.
Setting $k\defeq 5$ (say) and combining \eqref{eq:signals}
and \eqref{eq:signals2}, we derive the bound
\be\label{eq:solar2}
\cJ_\infty\,\mathop{\ll}_{\xi,\euB}\,X^{-1}.
\ee

Turning to the estimate of $\cJ_\star$, observe that the upper bound
\eqref{eq:Delta-constraint} implies that the interval
$\Omega_\Delta\defeq[\gamma_\star-\Delta,\gamma_\star+\Delta]$ lies
entirely inside $\R^+$; in particular, 
$$
\cJ_\star\defeq\int_{\Omega_\Delta}\er^{iXf(u)}g(u)\,du.
$$
Since $\gamma_\star\in[a_\star,b_\star]$ (and thus,
$\gamma_\star\,\mathop{\asymp}_\euB\,1)$, the estimate
$$
f'''(u)=\frac{4\pi\xi\gamma_\star}{u^3}\,\mathop{\asymp}_\euB\,\xi
$$
holds uniformly
for all $u\in\Omega_\Delta$, hence by Taylor's approximation we have
$$
\er^{iXf(u)}=\er^{iX\{f(\gamma_\star)
+\frac12f''(\gamma_\star)(u-\gamma_\star)^2\}}(1+r_1(u))
$$
for some complex function $r_1$ such that
$$
r_1(u)\,\mathop{\ll}_{\xi,\euB}\,X|u-\gamma_\star|^3
\qquad(u\in\Omega_\Delta).
$$
We can also write
$$
g(u)=g(\gamma_\star)+r_2(u),
$$
where $r_2$ satisfies the bound
$$
r_2(u)\,\mathop{\ll}_{\xi,\euB}\,|u-\gamma_\star|\qquad(u\in\Omega_\Delta).
$$
Therefore, since
\dalign{
\int_{\Omega_\Delta}\big|r_1(u)\big|\,du
&\,\mathop{\ll}_{\xi,\euB}\,X\int_{\gamma_\star-\Delta}^{\gamma_\star+\Delta}
|u-\gamma_\star|^3\,du\ll X\Delta^4,\\
\int_{\Omega_\Delta}\big|r_2(u)\big|\,du
&\,\mathop{\ll}_{\xi,\euB}\,\int_{\gamma_\star-\Delta}^{\gamma_\star+\Delta}
|u-\gamma_\star|\,du\ll\Delta^2,
}
and $\Delta^2\ll X\Delta^4=X^{-3/5}$ by \eqref{eq:Delta-constraint},
we derive the estimate
\begin{equation}
\label{eq:solar3}
\begin{split}
\cJ_\star
&=\int_{\Omega_\Delta}\er^{iX\{f(\gamma_\star)
+\frac12f''(\gamma_\star)(u-\gamma_\star)^2\}}(1+r_1(u))
\,(g(\gamma_\star)+r_2(u))\,du\\
&=\er^{-i\gamma}\gamma_\star^{-1/2+i\gamma}\euB(\gamma_\star)
\int_{\Omega_\Delta}
\er^{-\pi i\xi X\gamma_\star^{-1}(u-\gamma_\star)^2}\,du
+O_{\xi,\euB}(X^{-3/5}),
\end{split}
\end{equation}
where in the last step we used the identities
$$
Xf(\gamma_\star)=-\gamma+\gamma\log\gamma_\star,\qquad
f''(\gamma_\star)=-2\pi\xi\gamma_\star^{-1},\qquad
g(\gamma_\star)\defeq \gamma_\star^{-1/2}\euB(\gamma_\star).
$$

Next, we extend the range of integration
in the preceding integral to all of $\R$ (with
an acceptable error). Consider the integral
$$
\cK\defeq\mathop{\int_\R}_{(u\not\in\Omega_\Delta)}
\er^{-\pi i\xi X\gamma_\star^{-1}(u-\gamma_\star)^2}\,du.
$$
Making the change of variables $u\mapsto c\,u+\gamma_\star$,
where $c\defeq\sqrt{\gamma_\star/(\pi\xi X)}$, and applying
Lemma~\ref{lem:funnyexpint}, we have
$$
\cK=c\mathop{\int_\R}_{(|u|>\Delta/c)}
\er^{-iu^2}\,du\ll\,c^2\Delta^{-1}\,\mathop{\ll}_{\xi,\euB}\,X^{-3/5}.
$$
Using this bound together with \eqref{eq:solar3}, it follows that
$$
\cJ_\star=\er^{-i\gamma}\gamma_\star^{-1/2+i\gamma}\euB(\gamma_\star)
\int_\R\er^{-\pi i\xi X\gamma_\star^{-1}(u-\gamma_\star)^2}\,du
+O_{\xi,\euB}(X^{-3/5}).
$$

Combining the previous bound with \eqref{eq:solar2}, we have
\be\label{eq:solar5}
\cJ=\er^{-i\gamma}\gamma_\star^{-1/2+i\gamma}\euB(\gamma_\star)
\int_\R\er^{-\pi i\xi X\gamma_\star^{-1}(u-\gamma_\star)^2}\,du
+O_{\xi,\euB}(X^{-3/5}).
\ee
The integral here can be explicitly evaluated: 
$$
\int_\R\er^{-\pi i\xi X\gamma_\star^{-1}(u-\gamma_\star)^2}\,du
=\er^{-i\pi/4}\sqrt{\frac{\gamma_\star}{\xi X}}.
$$
Inserting this result into \eqref{eq:solar5}
and recalling \eqref{eq:solar1}, after some
simplification we find that
$$
\cI(\gamma)=\xi^{-1/2-i\gamma}
\e\Big(\frac{\gamma}{2\pi}\log\frac{\gamma}{2\pi\er}-\frac18\Big)
\euB(\gamma_\star)
+O_{\xi,\euB}(X^{-1/10}),
$$
and the proof is complete.
\end{proof}

{\large\section{Twisting the von Mangoldt function}
\label{sec:vonMangoldt-twist}}

\begin{theorem}\label{thm:vonMangoldt-twist}
Assume RH.
Let $\xi\in\R^+$, $\euB\in C_c^\infty(\R^+)$, and $X>X_\euB$. Then
$$
\sum_{n\ge 1}\Lambda(n)\e(-n\xi)\euB\Big(\frac{n}{X}\Big)
=-\ssum{\rho=\frac12+i\gamma}\xi^{-1/2-i\gamma}
\euZ(\rho)\euB\Big(\frac{\gamma}{2\pi\xi X}\Big)
+O_{\xi,\euB}(X^{9/10}).
$$
\end{theorem}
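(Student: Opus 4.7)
The plan is to combine Mellin inversion with Cauchy's residue theorem, reducing the left-hand side to a sum of the stationary-phase integrals $\cI(\gamma)$ of Lemma~\ref{lem:the-child}. Setting
\[
\tilde\Phi(s)\defeq\int_{\R^+}\e(-u\xi)\euB(u/X)u^{s-1}\,du,
\]
Mellin inversion on the line $\Re(s)=c>1$ gives
\[
\sum_{n\ge 1}\Lambda(n)\e(-n\xi)\euB(n/X)=\frac{1}{2\pi i}\int_{(c)}\Bigl(-\frac{\zeta'(s)}{\zeta(s)}\Bigr)\tilde\Phi(s)\,ds.
\]
Repeated integration by parts in $u$ yields $\tilde\Phi(\sigma+it)\ll_{K,\xi,\euB}X^{\sigma}(1+|t|)^{-K}$ for every $K$, which together with the standard $O(\log^2|T|)$ bound for $-\zeta'/\zeta$ along ordinates $T$ chosen to avoid zeros justifies shifting the contour to $\Re(s)=-C$ (any large fixed $C$), with vanishing horizontal and negligible far-left vertical contributions.

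Three kinds of residues are crossed. The pole at $s=1$ contributes $\tilde\Phi(1)$, which by repeated integration by parts against the oscillation $\e(-u\xi)$ (in the spirit of Lemma~\ref{lem:mandalorian}) is $O_{K,\xi,\euB}(X^{-K})$ for every $K$. Each trivial zero $s=-2k$ contributes $O(X^{-2k})$, negligibly. Under RH every nontrivial zero is $\rho=\tfrac12+i\gamma$, and the residue of $-\zeta'/\zeta$ there equals $-m_\rho$, so the zero contributes $-m_\rho\cI(\gamma)$. Summing zeros with multiplicity, the theorem reduces to
\[
\sum_{\rho}\cI(\gamma)=\ssum{\rho=\frac12+i\gamma}\xi^{-1/2-i\gamma}\,\euZ(\rho)\,\euB\Bigl(\tfrac{\gamma}{2\pi\xi X}\Bigr)+O_{\xi,\euB}(X^{9/10}).
\]

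This identity is assembled from Lemma~\ref{lem:the-child}. Zeros with $\gamma_\star\defeq\gamma/(2\pi\xi X)\notin[a_\star,b_\star]$ satisfy $\cI(\gamma)\ll X^{1/2}\max\{X^{-2}|\gamma|^{-2},|\gamma|^{-4}\}$; splitting at $|\gamma|=X^{1/2}$ and using $N(T+1)-N(T)\ll\log T$, their aggregate contribution is $O_{\xi,\euB}(X^{1/2}\log X)$. The remaining $O(X\log X)$ zeros have $\gamma_\star\in[a_\star,b_\star]$ (so $\gamma\asymp X$), and each satisfies
\[
\cI(\gamma)=\xi^{-1/2-i\gamma}\,\e\Bigl(\tfrac{\gamma}{2\pi}\log\tfrac{\gamma}{2\pi\er}+\tfrac78\Bigr)\euB(\gamma_\star)+O_{\xi,\euB}(X^{-1/10}),
\]
so the aggregate stationary-phase error is $O_{\xi,\euB}(X^{9/10}\log X)$---comfortably inside $X^{9/10+\eps}$, and reducible to $X^{9/10}$ by a slight sharpening of the lemma or a tighter count of ordinates.

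The conceptual crux is to identify the stationary-phase factor $\e(\tfrac{\gamma}{2\pi}\log\tfrac{\gamma}{2\pi\er}+\tfrac78)$ with $\euZ(\rho)=\overline{\e(S(\gamma))}$. Evaluating \eqref{eq:NTSTrelation} at $T=\gamma$ gives
\[
\tfrac{\gamma}{2\pi}\log\tfrac{\gamma}{2\pi\er}+\tfrac78=N(\gamma)-S(\gamma)+O(\gamma^{-1}).
\]
At a zero of multiplicity $m_\rho$ the half-sum convention forces $N(\gamma)=N(\gamma^-)+m_\rho/2$ and $S(\gamma)=S(\gamma^-)+m_\rho/2$, so the two half-integer shifts cancel: $N(\gamma)-S(\gamma)=N(\gamma^-)-S(\gamma^-)$. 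Since $N(\gamma^-)$ is an integer, applying $\e(\cdot)$ produces exactly $\overline{\e(S(\gamma^-))}=\euZ(\rho)$, and each zero therefore contributes $-m_\rho\,\xi^{-1/2-i\gamma}\euZ(\rho)\euB(\gamma_\star)$ as needed. The genuine obstacle is technical: the $O(\gamma^{-1})$ error in \eqref{eq:NTSTrelation} gives a harmless multiplicative $1+O(X^{-1})$ correction per zero (since $\gamma\asymp X$ there), but the contour-shift, trivial-zero, and stationary-phase errors must be checked to fit jointly within the $X^{9/10}$ budget---the $X^{9/10}\log X$ from the main range being the tightest and requiring the refinement mentioned above.
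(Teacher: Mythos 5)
Your argument follows essentially the same route as the paper: both reduce the left-hand side to $\sum_\rho\hat\varphi(\rho)$ via the explicit formula (you re-derive it from Mellin inversion and a contour shift, whereas the paper cites a ready-made version from Iwaniec--Kowalski and absorbs the pole at $s=1$ together with the trivial zeros into a single integral bounded by Lemma~\ref{lem:mandalorian} --- an equivalent bookkeeping), then both apply Lemma~\ref{lem:the-child} zero-by-zero and identify the amplitude $\e\big(\tfrac{\gamma}{2\pi}\log\tfrac{\gamma}{2\pi\er}+\tfrac78\big)$ with $\euZ(\rho)$ via \eqref{eq:NTSTrelation}, with your half-sum bookkeeping of $N(\gamma)-S(\gamma)$ matching the paper's limit argument. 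Your observation that summing the $O(X^{-1/10})$ errors over the $\asymp X\log X$ ordinates with $\gamma_\star\in[a_\star,b_\star]$ gives $X^{9/10}\log X$ rather than $X^{9/10}$ is a fair catch --- the paper silently drops that $\log X$ in the display preceding \eqref{eq:sydney} --- but it is harmless downstream, since Theorems~\ref{thm:RHvsGRH} and~\ref{thm:RHvsGRH2} only use the estimate up to a factor of $X^\eps$.
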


\begin{proof}
Our goal is to estimate
$$
\sum_{n\ge 1}\Lambda(n)\e(-n\xi)\euB\Big(\frac{n}{X}\Big)
=\sum_{n\ge 1}\Lambda(n)\varphi(n),
$$
where $\varphi(u)\defeq\e(-u\xi)\euB(u/X)$.
By the explicit formula
(see, e.g., Iwaniec and Kowalski~\cite[Exercise~5, p.~109]{IwaniecKowalski})
we have
\be\label{eq:explform}
\sum_{n\ge 1}\Lambda(n)\varphi(n)
=\int_{\R^+}\Big(1-\frac{1}{(u-1)u(u+1)}\Big)\varphi(u)\,du
-\sum_\rho\hat\varphi(\rho),
\ee
where $\hat\varphi$ is the Mellin transform of $\varphi$ given by
$$
\hat\varphi(s)\defeq\int_{\R^+} \varphi(u)u^{s-1}\,du.
$$
Using Lemma~\ref{lem:mandalorian} to bound the integral in \eqref{eq:explform},
we get that
\be\label{eq:FcX}
\sum_{n\ge 1}\Lambda(n)\varphi(n)
=-\sum_\rho\hat\varphi(\rho)+O_{\xi,\euB}(X^{-1}).
\ee

Next, for any complex zero $\rho=\tfrac12+i\gamma$ of $\zeta(s)$ we have
$$
\hat\varphi(\rho)\defeq\int_{\R^+} 
\e(-u\xi)\euB(u/X)u^{-1/2+i\gamma}\,du,
$$
which is the integral $\cI(\gamma)$ considered in
Lemma~\ref{lem:the-child}. Defining $a,a_\star,a,b_\star,\gamma_\star$
as in Lemma~\ref{lem:the-child}, it follows that
\be\label{eq:sydney}
\sum_{\rho=\frac12+i\gamma}\hat\varphi(\rho)
=\Sigma_1+O_{\xi,\euB}(X^{9/10}),
\ee
where
$$
\Sigma_1\defeq\ssum{\rho=\frac12+i\gamma\\
\gamma_\star\in[a_\star,b_\star]}\xi^{-1/2-i\gamma}
\e\Big(\frac{\gamma}{2\pi}\log\frac{\gamma}{2\pi\er}+\frac78\Big)
\euB(\gamma_\star).
$$
Note that the error term in \eqref{eq:sydney} is a consequence of
the following bounds on the sums that arise
naturally in our application of Lemma~\ref{lem:the-child}:
$$
\ssum{\rho=\frac12+i\gamma\\
\gamma_\star\not\in[a_\star,b_\star]}X^{1/2}
\max\big\{X^{-2}|\gamma|^{-2},|\gamma|^{-4}\big\}
\,\mathop{\ll}_{\xi,\euB}\,X^{1/2},
\qquad
\ssum{\rho=\frac12+i\gamma\\
\gamma_\star\in[a_\star,b_\star]}X^{-1/10}
\,\mathop{\ll}_{\xi,\euB}\,X^{9/10}.
$$
The condition ``$\gamma_\star\in[a_\star,b_\star]$'' in the above
definition of $\Sigma_1$ is redundant (indeed,
we have $a_\star<a<b<b_\star$ by \eqref{eq:ast-bst-defn}, hence
$\euB(\gamma_\star)=0$ if $\gamma_\star\not\in[a_\star,b_\star]$),
and so we can simply write
$$
\Sigma_1=\ssum{\rho=\frac12+i\gamma}\xi^{-1/2-i\gamma}
\e\Big(\frac{\gamma}{2\pi}\log\frac{\gamma}{2\pi\er}+\frac78\Big)
\euB\Big(\frac{\gamma}{2\pi\xi X}\Big).
$$
Next, observe that \eqref{eq:NTSTrelation} implies
$$
\e\Big(\frac{T}{2\pi}\log\frac{T}{2\pi\er}+\frac78\Big)
=\e\big(N(T)-S(T)+O(T^{-1})\big)
=\overline{\e\big(S(T)\big)}+O(T^{-1})
$$
provided that $T>0$ is not the ordinate of a zero of $\zeta(s)$
(since $N(T)\in\Z$). Taking the limit as $T\to\gamma$, we get that
$$
\e\Big(\frac{\gamma}{2\pi}\log\frac{\gamma}{2\pi\er}+\frac78\Big)
=\euZ(\rho)+O(\gamma^{-1}),
$$
where (as in \S\ref{sec:intro})
$$
\euZ(\rho)\defeq\lim\limits_{T\to\gamma}\overline{\e\big(S(T)\big)}.
$$
Thus, up to an acceptable error, we can replace $\Sigma_1$ in
\eqref{eq:sydney} with the quantity
$$
\Sigma_1\defeq\ssum{\rho=\frac12+i\gamma}\xi^{-1/2-i\gamma}
\euZ(\rho)\euB\Big(\frac{\gamma}{2\pi\xi X}\Big).
$$
The theorem now follows by combining \eqref{eq:FcX}
and \eqref{eq:sydney}.
\end{proof}

{\large\section{Proofs of Theorems~\ref{thm:RHvsGRH} and~\ref{thm:RHvsGRH2}}
\label{sec:pfThm1}}

We begin by considering the GRH for individual Dirichlet $L$-functions.
For convenience, we formulate the following hypothesis.

\bigskip\noindent{\sc Hypothesis ${\rm GRH}[\beta_0,\chi]$}:
{\it Given $\beta_0\in[\frac12,1)$ and a Dirichlet character $\chi$, the
inequality $\beta\le\beta_0$ holds for all zeros $\rho=\beta+i\gamma$ of
the $L$-function $L(s,\chi)$.}

\bigskip\noindent Note that ${\rm GRH}[\beta_0]$ is true if and only if
${\rm GRH}[\beta_0,\chi]$ holds for all characters~$\chi$.

\bigskip

\begin{lemma}\label{lem:ultraclean}
Fix $\beta_0\in[\tfrac12,1)$, and let
$\chi$ be a nonprincipal character of modulus $q$. Then the following are equivalent:
\begin{itemize}
\item[$(i)$] Hypothesis ${\rm GRH}[\beta_0,\chi]$ is true;
\item[$(ii)$] For any function $\euB\in C_c^\infty(\R^+)$, we have
$$
\sum_{n\ge 1}\Lambda(n)\chi(n)\euB(n/X)
\,\mathop{\ll}\limits_{q,\euB,\eps}\, X^{\beta_0+\eps}.
$$
\end{itemize}
\end{lemma}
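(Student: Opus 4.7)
The plan is to establish the two directions separately via the explicit formula (for $(i)\Rightarrow(ii)$) and via Mellin inversion with analytic continuation (for $(ii)\Rightarrow(i)$). Throughout, let $\hat\euB(s)\defeq\int_{\R^+}\euB(u)u^{s-1}\,du$ denote the Mellin transform. Since $\euB\in C_c^\infty(\R^+)$, repeated integration by parts gives $\hat\euB(\sigma+i\tau)\ll_{\euB,N,A}(1+|\tau|)^{-N}$ for every integer $N\ge 0$, uniformly on any vertical strip $|\sigma|\le A$.

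For $(i)\Rightarrow(ii)$, I would start from the standard contour representation
\[
\sum_{n\ge 1}\Lambda(n)\chi(n)\euB(n/X)
=\frac{1}{2\pi i}\int_{(c)}\Big(-\frac{L'}{L}(s,\chi)\Big)\hat\euB(s)X^s\,ds
\qquad(c>1)
\]
and shift the contour leftward to the line $\Re(s)=-1$. Because $\chi$ is nonprincipal, $L(s,\chi)$ is entire, so the only poles of $-L'/L(s,\chi)$ are at the zeros of $L(s,\chi)$, with residues $X^\rho\hat\euB(\rho)$. The trivial zeros and the residual integral on $\Re(s)=-1$ contribute $O_{q,\euB}(1)$. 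Under $\text{GRH}[\beta_0,\chi]$ every nontrivial $\rho$ satisfies $\Re(\rho)\le\beta_0$, so
\[
\Big|\sum_\rho X^\rho\hat\euB(\rho)\Big|\le X^{\beta_0}\sum_\rho|\hat\euB(\rho)|,
\]
and the rapid decay of $\hat\euB$, combined with the Riemann--von Mangoldt bound $N(T+1,\chi)-N(T,\chi)\ll\log(q(|T|+2))$, forces $\sum_\rho|\hat\euB(\rho)|=O_{q,\euB}(1)$. This gives the stronger bound $\ll_{q,\euB}X^{\beta_0}$, which implies $(ii)$.

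For $(ii)\Rightarrow(i)$, I would argue by contraposition. Suppose $L(s,\chi)$ has a zero $\rho_0$ with $\Re(\rho_0)>\beta_0$. Set $F(X)\defeq\sum_{n\ge 1}\Lambda(n)\chi(n)\euB(n/X)$; because $\euB$ has compact support $[a,b]\subset\R^+$, we have $F(X)=0$ for $X<1/b$. A change of variables $u=n/X$ yields, for $\Re(s)>1$,
\[
\int_{\R^+}F(X)X^{-s-1}\,dX
=\hat\euB(s)\sum_{n\ge 1}\Lambda(n)\chi(n)n^{-s}
=-\hat\euB(s)\frac{L'}{L}(s,\chi).
\]
If $(ii)$ held, the left side would converge absolutely for $\Re(s)>\beta_0+\eps$ for every $\eps>0$, and hence define a holomorphic function of $s$ in the open half-plane $\Re(s)>\beta_0$. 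Choose any $\euB\in C_c^\infty(\R^+)$ with $\hat\euB(\rho_0)\ne 0$: this is always possible because $\hat\euB$ is entire and nonzero whenever $\euB\not\equiv 0$, so the zero set is discrete, and a small translation or dilation of any fixed bump function avoids $\rho_0$. For such an $\euB$, the right-hand side has a genuine pole at $s=\rho_0$, contradicting the holomorphy just established; hence no such zero $\rho_0$ exists, i.e.\ $(i)$ holds.

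The argument is essentially standard, so the main obstacle is bookkeeping rather than substance: in the forward direction one must justify the termwise absolute convergence of the shifted integral using the Schwartz-type decay of $\hat\euB$ and standard zero-density bounds for $L(s,\chi)$; in the reverse direction one must verify that the Mellin integral converges absolutely down to $\Re(s)=\beta_0$ (which is immediate from $F(X)=0$ for small $X$ together with the hypothesized polynomial growth) and exhibit a specific admissible $\euB$ with $\hat\euB(\rho_0)\ne 0$.
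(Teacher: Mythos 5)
Your proposal is correct and takes a genuinely different route from the paper. The paper's proof passes through the intermediate statement $\sum_{n\le X}\Lambda(n)\chi(n)\ll_{q,\eps}X^{\beta_0+\eps}$, appeals to the classical equivalence between this bound and ${\rm GRH}[\beta_0,\chi]$, derives $(ii)$ from it by partial summation, and then recovers the sharp-cutoff bound from $(ii)$ via a fixed bump function equal to $1$ on $[\frac12,1]$ together with a dyadic decomposition. Your argument instead works directly: for $(i)\Rightarrow(ii)$ you shift the contour in the Mellin representation of the smoothed sum and estimate the residues at the nontrivial zeros using the rapid decay of $\hat\euB$ and the Riemann--von Mangoldt zero count, which in fact gives the slightly sharper bound $\ll_{q,\euB}X^{\beta_0}$ (no $\eps$); for $(ii)\Rightarrow(i)$ you read the Mellin transform of $X\mapsto F(X)$ backwards, observe that the hypothesized growth bound forces $-\hat\euB(s)\,L'/L(s,\chi)$ to continue holomorphically into $\Re(s)>\beta_0$, and choose $\euB$ so that $\hat\euB$ does not vanish at a hypothetical offending zero. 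Your direct Mellin argument is self-contained and avoids the sign/oscillation issues that one has to be careful about when comparing a smoothed sum $\sum\Lambda(n)\chi(n)\euB(n/X)$ with a sharp-cutoff sum $\sum_{X/2<n\le X}\Lambda(n)\chi(n)$, since $\Lambda(n)\chi(n)$ is not nonnegative; the paper's route is shorter but leans on that comparison step.

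One small inaccuracy to fix: in the $(ii)\Rightarrow(i)$ direction you assert that a ``dilation'' of a fixed bump function can be used to arrange $\hat\euB(\rho_0)\ne 0$, but this does not work, since replacing $\euB(u)$ by $\euB(u/t)$ sends $\hat\euB(s)$ to $t^s\hat\euB(s)$ and hence leaves the zero set of the Mellin transform unchanged. A correct deformation is multiplication by a power of $u$: taking $\euB_c(u)\defeq u^c\euB(u)$ gives $\hat\euB_c(s)=\hat\euB(s+c)$, and since $\hat\euB$ is entire and not identically zero (e.g.\ it is strictly positive on the real axis when $\euB\ge 0$, $\euB\not\equiv 0$), its zero set is discrete, so some real $c$ yields $\hat\euB(\rho_0+c)\ne 0$. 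With this substitution your reverse implication is complete.
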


\begin{proof}
Consider the intermediate bound
\be\label{eq:birdcall}
\sum_{n\le X}\Lambda(n)\chi(n)\,\mathop{\ll}\limits_{q,\eps}\,X^{\beta_0+\eps}.
\ee
The equivalence $(i)\Longleftrightarrow\eqref{eq:birdcall}$ is standard and well known,
and the implication $\eqref{eq:birdcall}\Longrightarrow(ii)$ is immediate using
partial summation. To prove $(ii)\Longrightarrow\eqref{eq:birdcall}$
one uses a \underline{fixed} test
function $\euB\in C_c^\infty(\R^+)$ such that $0\le\euB(u)\le 1$ for all $u\in\R$,
and $\euB(u)=1$ for all $u\in[\tfrac12,1]$. By $(ii)$ we have
$$
\sum_{X/2<n\le X}\hskip-5pt\Lambda(n)\chi(n)
=\sum_{X/2<n\le X}\hskip-5pt\Lambda(n)\chi(n)\euB(n/X)
\le\sum_{n\ge 1}\Lambda(n)\chi(n)\euB(n/X)
\,\mathop{\ll}\limits_{q,\eps}\, X^{\beta_0+\eps},
$$
and then \eqref{eq:birdcall} follows by a standard dyadic argument.
\end{proof}

\begin{lemma}\label{lem:aloevera}
Fix $\beta_0\in[\tfrac12,1)$. Let $\xi\defeq m/q$ with $0<m<q$ and $(m,q)=1$.
If ${\rm GRH}[\beta_0,\chi]$ holds for all nonprincipal characters
$\chi\bmod q$, then for any $\eps>0$ we have
\be\label{eq:meowing}
\sum_{n\ge 1}\Lambda(n)\e(-nm/q)\euB(n/X)
=\frac{\mu(q)}{\phi(q)}\sum_{n\ge 1}\Lambda(n)\euB(n/X)
+O_{\xi,\euB,\eps}(X^{\beta_0+\eps})
\ee
\end{lemma}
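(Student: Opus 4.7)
The plan is to decompose $\e(-nm/q)$, restricted to integers coprime to $q$, as a linear combination of Dirichlet characters modulo $q$; the main term then comes from the principal character via a Ramanujan-sum evaluation, and the nonprincipal contributions are controlled by Lemma~\ref{lem:ultraclean} applied under ${\rm GRH}[\beta_0,\chi]$.

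First I would separate off the contribution of integers $n$ with $(n,q)>1$: only prime powers $n=p^k$ with $p\mid q$ and $n\in X\,\text{\rm supp}(\euB)$ contribute, giving $O_{q,\euB}(\log X)$ terms each weighted by $\log p\ll\log q$. This contribution is negligible against $X^{\beta_0+\eps}$. On the coprime set, orthogonality of characters mod $q$ gives
$$
\e(-nm/q)=\frac{1}{\phi(q)}\sum_{\chi\bmod q}c_\chi\,\chi(n),\qquad
c_\chi\defeq\sum_{\substack{1\le a\le q\\(a,q)=1}}\bar\chi(a)\,\e(-am/q).
$$
For $\chi=\chi_0$ the coefficient is the Ramanujan sum $c_q(-m)=\mu(q)$ (using $(m,q)=1$); for $\chi\ne\chi_0$ the trivial bound $|c_\chi|\le\phi(q)$ is more than enough, sharper bounds via Gauss sums being available but unneeded.

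Multiplying by $\Lambda(n)\euB(n/X)$ and summing, and using $\chi(n)=0$ whenever $(n,q)>1$ to drop the coprimality condition inside each inner character sum, the principal-character piece becomes $\tfrac{\mu(q)}{\phi(q)}\sum_n\Lambda(n)\chi_0(n)\euB(n/X)$, which matches the right-hand side of \eqref{eq:meowing} up to another $O_{q,\euB}(\log X)$ prime-power discrepancy. For each of the $\phi(q)-1$ nonprincipal characters $\chi\bmod q$, the hypothesis ${\rm GRH}[\beta_0,\chi]$ together with Lemma~\ref{lem:ultraclean} yields
$$
\sum_{n\ge 1}\Lambda(n)\chi(n)\euB(n/X)\,\mathop{\ll}\limits_{q,\euB,\eps}\,X^{\beta_0+\eps},
$$
and since $|c_\chi|/\phi(q)$ is bounded in terms of $q$ alone, summing over $\chi\ne\chi_0$ produces a total nonprincipal contribution of $O_{\xi,\euB,\eps}(X^{\beta_0+\eps})$. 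Assembling these pieces gives \eqref{eq:meowing}.

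The only mildly delicate point is that Lemma~\ref{lem:ultraclean} must be invoked for \emph{every} nonprincipal character $\chi\bmod q$, including imprimitive ones; however, both the hypothesis of the present lemma and the statement of Lemma~\ref{lem:ultraclean} are phrased for arbitrary nonprincipal characters mod $q$, so no passage to primitive inducing characters is required. I do not anticipate any substantive analytic obstacle here --- this is essentially the standard finite-Fourier expansion on $(\Z/q\Z)^\times$, and the hard analytic work has already been packaged into Lemma~\ref{lem:ultraclean}.
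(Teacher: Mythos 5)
Your proof is correct and follows essentially the same route as the paper: restrict to $(n,q)=1$ at negligible cost, expand $\e(-nm/q)$ over Dirichlet characters mod $q$ via orthogonality, identify the principal-character coefficient as the Ramanujan sum $c_q(-m)=\mu(q)$, and absorb the nonprincipal contributions with Lemma~\ref{lem:ultraclean}. The only cosmetic difference is your sharper $O_{q,\euB}(\log X)$ estimate for the prime-power discrepancies where the paper is content with $O_\euB(X^{1/2})$; both are well within the target error $X^{\beta_0+\eps}$.
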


\begin{proof}
Introducing an error of at most $O_\euB(X^{1/2})$, the sum on the left
side of \eqref{eq:meowing} can be replaced by
\dalign{
&\sum_{(n,q)=1}\Lambda(n)\,\e(-nm/q)\euB(n/X)=\sum_{(a,q)=1}\e(-am/q)
\sum_{n\equiv a\bmod q}\Lambda(n)\euB(n/X)\\
&\qquad\qquad=\frac{1}{\phi(q)}\sum_{(a,q)=1}\e(-am/q)
\sum_{\chi\bmod q}\overline\chi(a)
\sum_{n\ge 1}\Lambda(n)\chi(n)\euB(n/X).
}
The contribution from the principal character $\chi_0$ is
$$
\frac{1}{\phi(q)}\sum_{(a,q)=1}\e(-am/q)
\sum_{n\ge 1}\Lambda(n)\euB(n/X)
=\frac{\mu(q)}{\phi(q)}\sum_{n\ge 1}\Lambda(n)\euB(n/X)
$$
(the first sum is a Ramanujan sum), and by Lemma~\ref{lem:ultraclean}\,$(ii)$
the total contribution from all nonprincipal characters
is at most $O_{q,\euB,\eps}(X^{\beta_0+\eps})$.
\end{proof}

We are now ready to prove Theorems~\ref{thm:RHvsGRH} and~\ref{thm:RHvsGRH2}.

\begin{proof}[Proof of Theorem~\ref{thm:RHvsGRH}]
Assume RH. Since RH is equivalent to GRH in the case of principal characters,
we can assume $\chi$ is nonprincipal. Moreover, if
$\chi$ is induced from a primitive character~$\chi_\star$
of conductor $q_*$, then $L(s,\chi)$ and $L(s,\chi_\star)$ have the same zeros
in the critical strip since
$$
L(s,\chi)=L(s,\chi_*)
\sprod{p\,\mid\,q\\p\,\nmid\,q_*}(1-\chi_*(p)p^{-s}).
$$
Hence, to prove Theorem~\ref{thm:RHvsGRH} it suffices to show
that ${\rm GRH}[\frac{9}{10},\chi]$ holds for any Dirichlet $L$-function
attached to a \emph{primitive} Dirichlet character $\chi$ of modulus $q>1$.
In this situation, the following identity is well known:
\be\label{eq:chi-basic}
\chi(n)=\frac{\chi(-1)\tau(\chi)}{q}
\sum_{m\bmod q}\overline\chi(m)\e(-nm/q)
\qquad(n\in\Z),
\ee
where the sum runs over any complete set of residue
classes $m\bmod q$, and
$\tau(\chi)$ is the Gauss sum given by
$$
\tau(\chi)\defeq\sum_{n\bmod q}\chi(n)\e(n/q);
$$
see, e.g., Bump~\cite[Chapter 1]{Bump}. In particular,
for every $\euB\in C_c^\infty(\R^+)$ we have
\be\label{eq:calcu}
\sum_{n\ge 1}\Lambda(n)\chi(n)\euB(n/X)
=\frac{\chi(-1)\tau(\chi)}{q}\ssum{0<m<q\\(m,q)=1}\overline\chi(m)
\sum_{n\ge 1}\Lambda(n)\e(-nm/q)\euB(n/X).
\ee
Combining \eqref{eq:superbound} and Theorem~\ref{thm:vonMangoldt-twist}, the estimate
\be\label{eq:lator}
\sum_{n\ge 1}\Lambda(n)\e(-nm/q)\euB\Big(\frac{n}{X}\Big)
=-F_{q,\euB}(X)+O_{q,\euB,\eps}(X^{9/10+\eps})
\ee
holds uniformly for $0<m<q$ with $(m,q)=1$, where 
$$
F_{q,\euB}(X)\defeq\frac{\mu(q)}{\phi(q)}\sum_{n\ge 1}\Lambda(n)\euB(n/X).
$$
We insert \eqref{eq:lator} into \eqref{eq:calcu}. Since $F_{q,\euB}(X)$ is
independent of $m$, and
$$
\ssum{0<m<q\\(m,q)=1}\overline\chi(m)=0
$$
for the nonprincipal character $\chi$, we derive the bound
$$
\sum_{n\ge 1}\Lambda(n)\chi(n)\euB(n/X)
\,\mathop{\ll}\limits_{q,\euB,\eps}\, X^{9/10+\eps}.
$$
Since $\euB\in C_c^\infty(\R^+)$ is arbitrary, Lemma~\ref{lem:ultraclean}
shows that ${\rm GRH}[\frac{9}{10},\chi]$ is true, and we are done.
\end{proof}

\begin{proof}[Proof of Theorem~\ref{thm:RHvsGRH2}]
Assume RH and ${\rm GRH}[\frac{9}{10}]$. For any
$\euB\in C_c^\infty(\R^+)$ and $\xi\defeq m/q$ with $0<m<q$ and $(m,q)=1$,
we have by Theorem~\ref{thm:vonMangoldt-twist}:
$$
\sum_{n\ge 1}\Lambda(n)\e(-n\xi)\euB\Big(\frac{n}{X}\Big)
=-\ssum{\rho=\frac12+i\gamma}\xi^{-1/2-i\gamma}
\euZ(\rho)\euB\Big(\frac{\gamma}{2\pi\xi X}\Big)
+O_{\xi,\euB}(X^{9/10}),
$$
and by Lemma~\ref{lem:aloevera}:
$$
\sum_{n\ge 1}\Lambda(n)\e(-n\xi)\euB(n/X)
=\frac{\mu(q)}{\phi(q)}\sum_{n\ge 1}\Lambda(n)\euB(n/X)
+O_{\xi,\euB,\eps}(X^{9/10+\eps}).
$$
Combining these results, we obtain \eqref{eq:superbound}.
\end{proof}

{\large\section{Acknowledgements}}

The author thanks Henryk Iwaniec for some stimulating conversations.
Many thanks also to the referees for their helpful comments and for
pointing out flaws in the original version of the manuscript.


\begin{thebibliography}{99}

\bibitem{Borwein}
P.~Borwein, S.~Choi, B.~Rooney and A.~Weirathmueller (eds.),
\emph{The Riemann hypothesis. A resource for the afficionado and virtuoso alike.}
CMS Books in Mathematics, Springer, New York, 2008.

\bibitem{Bump}
D.~Bump,
\emph{Automorphic forms and representations.}
Cambridge Studies in Advanced Mathematics, 55. Cambridge University Press, Cambridge, 1997.

\bibitem{Daven}
H.~Davenport,
\emph{Multiplicative number theory.}
Third edition. Revised and with a preface by Hugh L.~Montgomery. Graduate Texts in Mathematics, 74. Springer-Verlag, New York, 2000.

\bibitem{IwaniecKowalski}
H.~Iwaniec and E.~Kowalski,
\emph{Analytic number theory.}
American Mathematical Society Colloquium Publications, 53. American Mathematical Society, Providence, RI, 2004. 

\bibitem{MontVau}
H.~L.~Montgomery and R.~C.~Vaughan,
\emph{Multiplicative number theory. I. Classical theory.}
Cambridge Studies in Advanced Mathematics, 97.
Cambridge University Press, Cambridge, 2007.

\bibitem{Riemann}
B.~Riemann, Ueber die Anzahl der Primzahlen unter einer gegebenen Gr\"osse.
\emph{Monatsberichte der Berliner Akademie}, 1859.

\bibitem{Titchmarsh}
E.~C.~Titchmarsh, 
\emph{The theory of the Riemann zeta-function.}
Second edition. Edited and with a preface by D. R. Heath-Brown.
The Clarendon Press, Oxford University Press, New York, 1986.

\end{thebibliography}
\end{document}